\newtheorem{theo}{\bf Theorem}[section]
\newtheorem{prop}[theo]{\bf Proposition}
\newtheorem{lemma}[theo]{\bf Lemma}
\newtheorem{defi}[theo]{\bf Definition}
\theoremstyle{remark}
\newcommand{\Z}{{\mathbb Z}}
\newcommand{\N}{{\mathbb N}}
\newcommand{\R}{{\mathbb R}}
\newcommand{\id}{{\rm id}}
\DeclareMathOperator{\sgn}{sgn}
\DeclareMathOperator{\Prob}{Prob}
\providecommand{\abs}[1]{\lvert#1\rvert}
\begin{document}

\title{Expected length of a product of random reflections}
\author{Jonas Sj{\"o}strand}
\address{Department of Mathematics, Royal Institute of Technology \\
  SE-100 44 Stockholm, Sweden}
\email{jonass@kth.se}
\keywords{permutation; transposition; inversion; Coxeter group; reflection;
absolute length}
\subjclass[2000]{Primary: 60J10; Secondary: 05A05}

\date{23 November 2010}

\begin{abstract}
We present a simple formula for the expected number of inversions in a
permutation of size $n$ obtained by applying $t$ random (not necessarily
adjacent) transpositions to the identity permutation. More general,
for any finite irreducible Coxeter group belonging to one of
the infinite families
(type A, B, D, and I), an exact expression
is obtained for the expected length of a product of $t$ random
reflections.
\end{abstract}

\maketitle

\section{Introduction}
\noindent
In 2000, Eriksson et.~al.~\cite{erikssonerikssonsjostrand} studied the
expected number of inversions after $t$ random adjacent transpositions
applied to the identity permutation. Their main motivation came from
the area of molecular evolution where a genome (modelled as a permutation)
undergoes random mutations (modelled as adjacent transpositions). Since then
several people have analysed this and similar problems, motivated both by
biology and pure mathematics.

In 2005, Eriksen~\cite{eriksen} improved the results
from~\cite{erikssonerikssonsjostrand} and obtained an exact formula
for the expected number of inversions. More recently, in 2010,
Bousquet-M\'elou~\cite{bousquetmelou} gave a completely different
exact expression for the same thing! In 2002, Troili~\cite{troili}
generalized the question to other groups than the symmetric group,
and she gave corresponding exact formulas for the dihedral groups $I_2(m)$.

In 2004,
Eriksen and Hultman~\cite{eriksenhultman1} modelled the genome
rearrangement problem closer to reality by considering \emph{any} random
transpositions (and not just adjacent ones) as mutations. Instead of
the number of inversions they computed the expected \emph{absolute length}
of the resulting permutation, i.e.~the minimal
number of transpositions whose product is the permutation.
In a sequel~\cite{eriksenhultman2} they generalized their results to
a class of complex reflection groups.

For an expos\'e of the research on more general aspects of Markov chains
generated by random (adjacent) transpositions, such as mixing times,
we refer to Bousquet-M\'elou~\cite{bousquetmelou}.

In the present paper, we will \emph{generate} permutations in the same
way as Eriksen and
Hultman, i.e.~by random transpositions, but we will \emph{meausure} them
by the number of inversions as in the original papers. We will also
generalize the framework to all finite Coxeter groups and present exact
expectancy formulas for the groups $A_n$, $B_n$, $D_n$ and $I_2(m)$.
In Coxeter group lingo, we compute the expected \emph{length}
of a product of $t$ random \emph{reflections}.

The paper is organized as follows.
After a section with some basic definitions, we review the previously
known results mentioned above so that the reader gets a feeling for what kind
of expressions may appear for the expected (absolute) length and also what
type of
combinatorics is going on behind the formulas. Next, in
section~\ref{sec:typeI} we loosen up by computing the expected length
and absolute length of a product of random reflections or
simple reflections in the dihedral groups $I_2(m)$. After this relatively
simple task, in section~\ref{sec:typeABD}
we are ready to state and prove our main results: formulas for
the expected length of a product of random reflections in the Weyl
groups of type A, B, and D. Finally, in section~\ref{sec:future}
we summarize the current state of affairs and reveal some of our ideas for
future research.

\section{Definitions and notation}
\noindent
We will follow the notation from the book by Bj\"orner and
Brenti~\cite{bjornerbrenti} where also the definitions of the basic
concepts may be found. For the reader's convenience we have included
the most relevant definitions below.

A \emph{Coxeter group} is a group $W$
with a presentation of the form 
\[
\langle S\,|\,(ss')^{m(s,s')}=\id\text{ for all }s,s'\in S
\text{ with }m(s,s')\ne\infty\rangle,
\]
where $m$ is a function from $S$ to $\{1,2,\dotsc,\infty\}$
such that $m(s,s')=m(s',s)$ for all $s,s'\in S$ and $m(s,s')=1$ if and only if
$s=s'$. The elements
of $S$ (which we identify with their image in $W$ through the presentation)
are called \emph{generators} or \emph{simple reflections} and
the pair $(W,S)$ is called a \emph{Coxeter system}. The group elements
that are conjugate to some simple reflection are called \emph{reflections}
and the set of reflections is usually denoted by $T$.

By definition, any group element $w\in W$ is a (finite) product of simple
reflections, and the minimal $\ell$ such that $w$ is a product of $\ell$
simple reflections is called the \emph{length} of $w$ and
is denoted by $\ell(w)$. Similarly, the minimal $\ell'$ such that $w$
is a product of
$\ell'$ reflections is called the \emph{absolute length} of $w$ and is denoted
by $\ell'(w)$.

Now, let us make a very general definition.
\begin{defi}
Given a (multiplicative) monoid $M$, a nonempty finite subset $R\subseteq M$,
a ``length'' function $\varphi$ from $M$ to $\N=\{0,1,\dotsc\}$,
and an integer $t\ge0$, we define
\[
E^M_{R,\varphi}(t):=
\frac{1}{\abs{R}^t}
\sum_{r_1,\dotsc,r_t\in R}\varphi(r_1\dotsm r_t)
\]
to be the expected ``length'' of a product of $t$ random elements,
chosen independently and uniformly from $R$.
\end{defi}
As a special case, for a Coxeter system $(W,S)$ with $S$ finite,
$E^W_{S,\ell}(t)$ denotes the expected length of a product of
$t$ simple reflections. Similarly, if $W$ is finite,
$E^W_{T,\ell'}(t)$ denotes the expected absolute length of a product
of $t$ reflections.

As mentioned in the introduction,
exact formulas for $E^{W}_{S,\ell}(t)$ have been found
for the symmetric groups $(W,S)=A_n$ by Eriksen~\cite{eriksen}
and for the dihedral groups $(W,S)=I_2(m)$ by Troili~\cite{troili}.
Exact formulas for  $E^{W}_{T,\ell'}(t)$ have been found by
Eriksen and Hultman, first for the symmetric groups~\cite{eriksenhultman1}
and later for the complex reflection groups $G(r,1,n)$ where the
hyperoctahedral groups $B_n$ are included.

In the present paper we will compute $E^{W}_{T,\ell}$ for the
three infinite families of finite irreducible Coxeter systems
$A_n$, $B_n$, $D_n$, and $I_2(m)$.

\section{Known results}\label{sec:knownresults}
\noindent
To get a flavour of the kind of expressions that may appear, we will
begin by presenting the already known results and discussing
briefly what methods were used to obtain them, before we reveal
our own results in the next section.

\begin{theo}[Eriksen (2005), Bousquet-M\'elou (2010)]
Let $n\ge1$ and $t\ge0$.
The expected number of inversions after $t$ random adjacent transpositions
applied to the identity permutation in $A_n$ is
\[
E^{A_n}_{S,\ell}(t)=\sum_{r=1}^t\frac{1}{n^r}\binom{t}{r}
\sum_{s=1}^r\binom{r-1}{s-1}(-1)^{r-s}4^{r-s}g_{s,n},
\]
where
\[
g_{s,n}=\sum_{l=0}^n\sum_{k\in\N}(-1)^k(n-2l)
\binom{2\lceil s/2\rceil-1}{\lceil s/2\rceil+l+k(n+1)}
\sum_{j\in\Z} (-1)^j\binom{2\lceil s/2\rceil}{\lceil s/2\rceil+j(n+1)}.
\]
An alternative formula is
\[
E^{A_n}_{S,\ell}(t)=\frac{n(n+1)}{4}-\frac{1}{8(n+1)^2}
\sum_{k,j=0}^n
\frac{(\cos\alpha_j+\cos\alpha_k)^2}{\sin^2\!\alpha_j\sin^2\!\alpha_k}
\left(1-\frac{4}{n}(1-\cos\alpha_j\cos\alpha_k)\right)^t,
\]
where $\alpha_k=(2k+1)\pi/(2n+2)$.
\end{theo}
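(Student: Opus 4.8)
The plan is to reduce the trigonometric formula to the spectral analysis of a small explicit Markov chain; the first (purely combinatorial) formula needs a separate argument, which I address at the end. Write $N=n+1$, so that $A_n$ is the symmetric group $\fs_N$ acting on positions $1,\dots,N$, the simple reflections are the adjacent transpositions $s_i=(i,i+1)$, and $\ell(w)$ is the number of inversions of $w$. For a pair of values $a<b$, let $p_a(t)$ and $p_b(t)$ be the positions occupied by $a$ and by $b$ after $t$ random adjacent transpositions have been applied to $\id$, and put $\sigma^{(a,b)}_t:=\sgn\bigl(p_b(t)-p_a(t)\bigr)$. Since each step interchanges the contents of two adjacent positions, $\{a,b\}$ forms an inversion of the resulting permutation precisely when $\sigma^{(a,b)}_t=-1$, and hence
\[
E^{A_n}_{S,\ell}(t)=\sum_{1\le a<b\le N}\Prob\bigl[\sigma^{(a,b)}_t=-1\bigr]=\tfrac12\binom N2-\tfrac12\sum_{1\le a<b\le N}E\bigl[\sigma^{(a,b)}_t\bigr],
\]
where $\tfrac12\binom N2=n(n+1)/4$ is already the leading term of the asserted formula.

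The key observation is that for each fixed pair $\{a,b\}$ the ordered pair $\bigl(p_a(t),p_b(t)\bigr)$ is itself a Markov chain on $\{(x,y):1\le x\ne y\le N\}$, because the simple reflection applied at each step is chosen independently of the current permutation: picking the wall between positions $i$ and $i+1$ swaps the two marked particles if they occupy those two positions, moves a single marked particle if exactly one of them sits there, and does nothing otherwise. Let $P$ denote its transition operator, which is symmetric and stochastic. I would then prove the ``free fermion'' identity: \emph{on the space of antisymmetric functions on $\{1,\dots,N\}^2$, extended by $0$ along the diagonal, $P$ coincides with $I-\tfrac1n(L\otimes I+I\otimes L)$, where $L$ is the combinatorial Laplacian of the path graph on $N$ vertices.} This is a short case check on adjacent and boundary positions: the only ways $P$ differs from two non-interacting single-particle walks are the forbidden diagonal and the swap move for adjacent particles, and both are absorbed automatically, since an antisymmetric function vanishes on the diagonal while the swap sends $f(x,y)\mapsto f(y,x)=-f(x,y)$, exactly cancelling the diagonal contribution that is otherwise omitted from $L\otimes I+I\otimes L$. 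It follows that $P$ is diagonalized on the antisymmetric subspace by the Slater determinants $\Psi_{j,k}(x,y)=\phi_j(x)\phi_k(y)-\phi_j(y)\phi_k(x)$, $0\le j<k\le N-1$, where $\phi_j(p)=\cos\bigl((p-\tfrac12)j\pi/N\bigr)$ is the $j$th eigenfunction of the single-particle walk, with eigenvalue $1-\tfrac1n(\lambda_j+\lambda_k)$ and $\lambda_j=2-2\cos(j\pi/N)$.

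Since $E\bigl[\sigma^{(a,b)}_t\bigr]=(P^tg)(a,b)$ with $g(x,y)=\sgn(y-x)$ antisymmetric, expanding $g$ in the basis $\{\Psi_{j,k}\}$ and summing over pairs $a<b$ — which for an antisymmetric $F$ returns $\tfrac12\langle F,g\rangle$ — collapses the outer sum into
\[
\sum_{1\le a<b\le N}E\bigl[\sigma^{(a,b)}_t\bigr]=\tfrac12\sum_{0\le j<k\le N-1}\frac{\langle\Psi_{j,k},g\rangle^2}{\|\Psi_{j,k}\|^2}\Bigl(1-\tfrac1n(\lambda_j+\lambda_k)\Bigr)^t .
\]
Here $\|\Psi_{j,k}\|^2=2\|\phi_j\|^2\|\phi_k\|^2$ is immediate, and $\langle\Psi_{j,k},g\rangle$ reduces to $4\sum_{1\le x<y\le N}\phi_j(x)\phi_k(y)$, which I would evaluate by two applications of the Dirichlet-kernel identity $\sum_{x=1}^{m}\cos((x-\tfrac12)\theta)=\sin(m\theta)/(2\sin(\theta/2))$; the outcome is that this overlap vanishes unless $j+k$ is odd and is otherwise a product of half-angle cosecants (the case $j=0$, with $\phi_0\equiv1$ and $\|\phi_0\|^2=N$, is analogous but has to be recorded separately). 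For the surviving modes, setting $\alpha=\tfrac12(\theta_j+\theta_k)$ and $\beta=\tfrac12|\theta_j-\theta_k|$ with $\theta_j=j\pi/N$ — which are two of the angles $\alpha_r=(2r+1)\pi/(2n+2)$ of the statement, since $j+k$ and $|j-k|$ are then odd — the product-to-sum identities collapse the eigenvalue to $1-\tfrac4n(1-\cos\alpha\cos\beta)$ and the coefficient to a constant multiple of $(\cos\alpha+\cos\beta)^2/(\sin^2\alpha\sin^2\beta)$; finally the involution $(\alpha,\beta)\mapsto(\pi-\beta,\pi-\alpha)$ — under which both are invariant and which matches the ``large'' index region against the triangular region forced by $j<k$ — turns the sum into the full square $\sum_{j,k=0}^{n}$, the vanishing terms being precisely those with $\cos\alpha_j+\cos\alpha_k=0$, i.e.\ $j+k=n$.

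The main obstacle is exactly this closing trigonometric bookkeeping: keeping straight the normalizations, the $j=0$ exception, the parity constraint $j+k$ odd, and the reindexing plus folding that produce the symmetric double sum — the Markov-chain input itself (the free-fermion identity and the diagonalization) being the easy, conceptual part. As for Eriksen's first formula, it does not drop out of the spectral calculation; I would instead obtain it from the same marked-particle chain by a more combinatorial route — conditioning on the multiset of walls used among the $t$ steps and counting the surviving inversions by a reflection-principle argument on lattice paths — which is what produces the alternating binomial sums defining $g_{s,n}$, and which I expect to be the more laborious of the two derivations.
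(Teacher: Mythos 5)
First, a point of reference: the paper does not prove this theorem at all --- it sits in the ``Known results'' section, attributed to Eriksen and Bousquet-M\'elou, and the author only records that the first formula comes from a discrete heat-flow process on a two-dimensional lattice and the second from attacking that same process with generating functions and the kernel method. So there is no in-paper proof to match your argument against. That said, your route to the \emph{second} formula is sound and genuinely different from the cited kernel-method derivation: you diagonalize the two-particle marginal chain directly. The underlying object is the same two-dimensional heat flow the paper alludes to, and your strategy is in fact the adjacent-transposition analogue of what the paper itself does in Section~5 for arbitrary transpositions (the antisymmetric array $v^{(t)}$ and the operator $Q$ with $Q^2=nQ$); the difference is that for adjacent transpositions the relevant operator is $I-\tfrac1n(L\otimes I+I\otimes L)$ with $L$ the path Laplacian, which has a full spectrum instead of two eigenvalues --- whence the double sum over $j,k$. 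Your free-fermion identity is correct: for $y=x+1$ the swap contributes $\tfrac1n f(y,x)=-\tfrac1n f(x,y)$ and the lazy weight is $1-\tfrac3n$ rather than $1-\tfrac4n$, while the non-interacting operator replaces those two discrepancies by the vanishing diagonal values $f(x,x),f(y,y)$, and the two differences cancel exactly on antisymmetric functions; the boundary cases are handled by the Neumann eigenfunctions $\phi_j$. The parity selection rule ($\langle\Psi_{j,k},g\rangle=0$ unless $j+k$ is odd, via $x\mapsto N+1-x$) and the half-angle substitution matching $1-\tfrac1n(\lambda_j+\lambda_k)$ to $1-\tfrac4n(1-\cos\alpha\cos\beta)$ both check out, so what remains for that formula is only the normalization bookkeeping you already flagged.

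The genuine gap is Eriksen's first formula, which is half of the stated theorem and which your proposal does not prove. ``Conditioning on the multiset of walls used and a reflection-principle argument'' is a direction, not an argument: the specific structure of $g_{s,n}$ --- the shifts by $k(n+1)$ and $j(n+1)$ inside the two binomial sums, the factor $(n-2l)$, and the outer layer $\sum_r\binom{t}{r}n^{-r}\sum_s\binom{r-1}{s-1}(-4)^{r-s}$ that converts between ``number of non-lazy steps'' and the heat-flow time --- all has to be produced, and none of it follows from the spectral computation you set up (the two expressions are related by a nontrivial Poisson-summation/partial-fraction identity, not by rereading the same calculation). As it stands you would either need to carry out the lattice-walk enumeration in the strip with reflecting/absorbing boundaries in full, or cite Eriksen for that half. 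I would also note that your closing sentence concedes this is ``the more laborious of the two derivations,'' which is an accurate self-assessment: the second formula's derivation is essentially complete in outline, the first one is not started.
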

The proof of the first formula, due to Eriksen, is built on a
discrete heat flow process in a two-dimensional
lattice. His expression has a combinatorial interpretation in terms
of lattice walks. The second formula, due to Bousquet-M\'elou, was obtained
by attacking the same heat flow process by generating functions and the kernel
method. It has the advantage that the dependence on $t$ is very
explicit which makes it possible to analyse what happens in the limit when
$n$ and $t$ tend to infinity at the same time in various ways.

\begin{theo}[Troili, 2002]\label{th:troili}
Let $m\in\{2,3,\dotsc,\infty\}$ and let $t\ge0$.
Then the expected length of a product of $t$ random simple reflections
in the dihedral group $I_2(m)$ is
\[
E^{I_2(m)}_{S,\ell}(t)=\sum_{j=0}^{\lfloor\frac{t-1}{2}\rfloor}
\frac{1}{4^j}\left[\binom{2j}{j}+2\sum_{k\ge1}\binom{2j}{j-km}\right]-\Sigma,
\]
where
\[
\Sigma=
\begin{cases}
\sum_{j=1}^{\lfloor\frac{t-1}{2}\rfloor}
\frac{2}{4^j}\sum_{k\ge0}\binom{2j}{j-\frac{m}{2}-km}
& \text{if $m$ is even,} \\
\sum_{j=1}^{\lfloor\frac{t}{2}\rfloor}
\frac{2}{4^j}\sum_{k\ge0}
\binom{2j-1}{\frac{2j-1-m}{2}-km}
& \text{if $m$ is odd.}
\end{cases}
\]
Here, binomial coefficients $\binom{a}{b}$ with $b$ negative
(or $b=-\infty$) vanish by definition.
\end{theo}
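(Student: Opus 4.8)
The plan is to recognise the process as a walk on a cycle and reduce the whole computation to one telescoping identity. A product of $t$ random simple reflections in $I_2(m)$ is a nearest-neighbour random walk on the Cayley graph of $I_2(m)$ with respect to $\{s_1,s_2\}$ (at each step we right-multiply by a uniformly random generator), and this Cayley graph is the $2m$-cycle, the two edges at each vertex carrying the labels $s_1$ and $s_2$, which alternate around the cycle. The length of the element reached after $t$ steps is exactly its graph distance $D_t$ from the identity vertex, so $E^{I_2(m)}_{S,\ell}(t)=\mathbb E D_t$. Lifting the walk to the universal cover $\Z$ of the cycle, and noting that at each vertex of $\Z$ one of the two labels $s_1,s_2$ points forward and the other backward --- so a uniformly random generator induces a uniformly random step $\pm1$ --- we get $D_t=\min_{k\in\Z}\abs{Y_t-2mk}$, the folding onto $\{0,1,\dots,m\}$ of the simple random walk $Y_t=\varepsilon_1+\dots+\varepsilon_t$ on $\Z$. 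Equivalently, $(D_t)_{t\ge0}$ is the Markov chain on $\{0,1,\dots,m\}$, started at $0$, that moves $0\mapsto1$ and $m\mapsto m-1$ deterministically and $k\mapsto k\pm1$ with probability $\tfrac12$ each when $0<k<m$.

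A one-step drift computation then does most of the work. Conditioned on $D_{t-1}$, the mean increment $\mathbb E[D_t-D_{t-1}\mid D_{t-1}]$ equals $+1$ if $D_{t-1}=0$, equals $-1$ if $D_{t-1}=m$, and equals $0$ otherwise; hence $\mathbb E D_t=\mathbb E D_{t-1}+\Prob(D_{t-1}=0)-\Prob(D_{t-1}=m)$, and summing over the first $t$ steps (with $\mathbb E D_0=0$) gives the key formula
\[
E^{I_2(m)}_{S,\ell}(t)=\sum_{n=0}^{t-1}\Prob(D_n=0)-\sum_{n=0}^{t-1}\Prob(D_n=m).
\]
I would then evaluate the two sums by counting $\pm1$ lattice paths for $Y_n$. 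One has $D_n=0$ iff $Y_n\in 2m\Z$, which can happen only when $n$ is even; for $n=2j$ this gives $\Prob(D_{2j}=0)=4^{-j}\sum_{k\in\Z}\binom{2j}{j+km}=4^{-j}\bigl[\binom{2j}{j}+2\sum_{k\ge1}\binom{2j}{j-km}\bigr]$, and as $n$ runs over $0,1,\dots,t-1$ the index $j$ runs over $0,1,\dots,\lfloor(t-1)/2\rfloor$, reproducing exactly the first sum of the theorem. Likewise $D_n=m$ iff $Y_n$ is an odd multiple of $m$, which forces $n\equiv m\pmod 2$; substituting $n=2j$ when $m$ is even and $n=2j-1$ when $m$ is odd, and packaging the path counts into binomial coefficients via $\binom{N}{a}=\binom{N}{N-a}$ and the convention that binomials with negative lower entry vanish, turns the second sum into $\Sigma$ --- the two summation ranges $\lfloor(t-1)/2\rfloor$ and $\lfloor t/2\rfloor$ being forced by the bound $n\le t-1$ together with the parity of $n$.

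I do not anticipate a serious obstacle: the heart of the proof is the drift identity, and the rest is a routine lattice-path count. The care needed lies in two places. First, one must genuinely verify that the Cayley graph is the $2m$-cycle and that the lifted walk is the \emph{symmetric} random walk, so that $D_t$ is the folded simple random walk and the drift is supported precisely on the two endpoints $0$ (the identity) and $m$ (the longest element). Second, the bookkeeping in the last step deserves attention: keeping straight which residues modulo $2m$ correspond to which endpoint, converting the counts of paths that reach $2m\Z$ (respectively the odd multiples of $m$) into the stated binomial sums, and correctly sorting out the dichotomy between even and odd $m$ together with the resulting ranges of summation.
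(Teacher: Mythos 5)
Your proof is correct and rests on exactly the same key identity the paper attributes to Troili, namely $E^{I_2(m)}_{S,\ell}(t)=\sum_{r=0}^{t-1}\bigl[\Prob(\ell(\pi^{(r)})=0)-\Prob(\ell(\pi^{(r)})=m)\bigr]$, obtained from the one-step drift of the walk on the $2m$-cycle; the subsequent lattice-path evaluation of the two probability sums matches the stated binomial expressions. Since the paper only sketches Troili's argument by citing this observation, your write-up is a faithful (and somewhat more detailed) version of the same approach.
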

Troili's proof is built on the observation that
\[
E^{I_2(m)}_{S,\ell}(t)=
\sum_{r=0}^{t-1}\bigl[\Prob\bigl(\ell(\pi^{(r)})=0\bigr)
-\Prob\bigl(\ell(\pi^{(r)})=m\bigr)\bigr],
\]
where $\pi^{(r)}$ is a product of $r$ random reflections in $I_2(m)$.

\begin{theo}[Eriksen, Hultman, 2005]\label{th:eriksenhultman}
Let $r,n$ be positive integers not both equal to one.
Then the expected absolute length of a product of $t$ random reflections in
the complex reflection group $G(r,1,n)$ is
\begin{multline*}
E^{G(r,1,n)}_{T,\ell'}(t)=\\
n-\frac{1}{r}\sum_{k=1}^n\frac{1}{k}+
\frac{1}{r}\sum_{p=1}^{n-1}\sum_{q=1}^{\min(p,n-p)}a_{pq}
\left(\frac{r\bigl(\binom{p}{2}+\binom{q-1}{2}-\binom{n-p-q+2}{2}+n\bigr)-n}
{r\binom{n+1}{2}-n}\right)^t \\
+\frac{r-1}{r}\sum_{p=0}^{n-1}\sum_{q=1}^{n-p}b_{pq}
\left(\frac{r\bigl(\binom{p}{2}+\binom{q}{2}-\binom{n-p-q+1}{2}+p\bigr)-n}
{r\binom{n+1}{2}-n}\right)^t,
\end{multline*}
where
\[
a_{pq}=(-1)^{n-p-q+1}\frac{(p-q+1)^2}{(n-q+1)^2(n-p)}\binom{n}{p}
\binom{n-p-1}{q-1}
\]
and
\[
b_{pq}=\frac{(-1)^{n-p-q+1}}{n-p}\binom{n}{p}\binom{n-p-1}{q-1}.
\]
\end{theo}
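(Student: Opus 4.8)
\emph{Proof proposal.}
The shape of the answer---a constant plus finitely many terms of the form $(\text{scalar})^{t}$---points to Fourier analysis on the group $G:=G(r,1,n)$. The first step is to record the combinatorial meaning of the quantity being averaged. Writing $w\in G$ as a product of colored cycles, call a cycle \emph{balanced} when the product of the nonzero matrix entries met along it equals $1$, and let $b(w)$ be the number of balanced cycles; then
\[
\ell'(w)=n-b(w),
\]
equivalently $b(w)=\dim_{\mathbb C}\mathrm{Fix}_{\mathbb C^{n}}(w)$, the multiplicity of the eigenvalue $1$ of $w$ acting on $\mathbb C^{n}$. (For $r=1$ every cycle is balanced, $b(\pi)$ is the number of cycles of $\pi$, and $\ell'$ is the usual reflection length on $\fs_{n}$.) In particular $\ell'$, and hence $b$, is a class function on $G$.

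Next, set up the walk. Let $\mu$ be the uniform probability measure on $T$. Because $T$ is a union of conjugacy classes, $\mu$ is central in $\mathbb C[G]$, so by Schur's lemma its Fourier transform $\widehat\mu(\rho)=\abs T^{-1}\sum_{\tau\in T}\rho(\tau)$ acts on each irreducible representation $\rho$ as a scalar
\[
\lambda_{\rho}=\frac1{\abs T}\sum_{\tau\in T}\frac{\chi_{\rho}(\tau)}{\dim_{\rho}}.
\]
Writing $\ell'=\sum_{\rho}c_{\rho}\chi_{\rho}$ with $c_{\rho}=\langle\ell',\chi_{\rho}\rangle$, and using $\widehat{\mu^{*t}}(\rho)=\lambda_{\rho}^{\,t}\,\id$, Fourier inversion gives the master identity
\[
E^{G}_{T,\ell'}(t)=\sum_{\rho}c_{\rho}\,\dim_{\rho}\,\lambda_{\rho}^{\,t}.
\]
Everything now reduces to (i) computing the eigenvalues $\lambda_{\rho}$, and (ii) decomposing $b$ into irreducible characters.

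For (i): the irreducibles of $G=\mathbb Z_{r}\wr\fs_{n}$ are indexed by $r$-tuples of partitions with $n$ boxes in all, and $T$ is the disjoint union of the $r\binom{n}{2}$ transposition-type reflections and the $(r-1)n$ diagonal-type reflections, whence $\abs T=r\binom{n+1}{2}-n$. The sum of all diagonal-type reflections, and for each color the sum of all transposition-type reflections of that color, are central, so by the Jucys--Murphy calculus for wreath products their eigenvalues on an irreducible are explicit quadratic expressions in the contents of the boxes of the indexing multipartition; summing and dividing by $\abs T$ yields $\lambda_{\rho}$, and for the two families of representations that actually occur this reproduces exactly the two bases appearing in the statement. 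For (ii): since $b$ depends only on the underlying cycle type and on which cycles are balanced, the inner products $\langle b,\chi_{\rho}\rangle$ can be evaluated through the symmetric-function description of wreath-product characters. The outcome should be that $\langle b,\chi_{\rho}\rangle$ vanishes except when $\rho$ is (a) supported in the trivial $\mathbb Z_{r}$-component and of a specific shape determined by the pair $(p,q)$, or (b) carries all but $q$ of its boxes in the trivial component and the remaining $q$ boxes in a single nontrivial component---the $r-1$ equivalent choices of that component producing the overall factor $\tfrac{r-1}{r}$---together with the trivial representation, which contributes the constant term $n-\tfrac1r\sum_{k=1}^{n}\tfrac1k$ (this is just $\ell'$ averaged over a uniformly random $w\in G$: on average there are $H_{n}$ cycles, each balanced independently with probability $1/r$). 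Matching the coefficients $c_{\rho}\dim_{\rho}$ against the box data then produces precisely $a_{pq}$ and $\tfrac{r-1}{r}b_{pq}$, and substitution into the master identity finishes the proof.

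The main obstacle is step (ii): carrying out the character decomposition of $b$ cleanly and checking that only the two advertised families survive, with multiplicities exactly $a_{pq}$ and $b_{pq}$. Step (i) is unavoidable but routine once the wreath-product Jucys--Murphy elements are in hand, and the final assembly is mere substitution. A pleasant feature of this route is that it explains why the answer has one scalar per representation raised to the $t$-th power, and it makes the $r=1$ specialization transparent: there the base $-1$ arises from $(p,q)=(1,1)$, reflecting the parity obstruction to convergence on $\fs_{n}$.
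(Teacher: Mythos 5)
First, a point of context: the paper does not prove Theorem~\ref{th:eriksenhultman}. It is quoted as a known result of Eriksen and Hultman, and the surrounding text only remarks that their proof ``involves group representations and characters'' and that one need only track the conjugacy class of the evolving element. Your strategy is precisely that one --- harmonic analysis of the convolution walk driven by the uniform measure $\mu$ on $T$ --- so there is no methodological divergence to report. The framework you set up is also sound as far as it goes: $\ell'(w)=n-\dim\mathrm{Fix}(w)$ does hold in $G(r,1,n)$; $\mu$ is central, so $\widehat\mu(\rho)$ is the scalar $\lambda_\rho$; the master identity $E^{G}_{T,\ell'}(t)=\sum_\rho\langle\ell',\chi_\rho\rangle\dim_\rho\,\lambda_\rho^{\,t}$ is correct; and the easy consistency checks pass ($\abs{T}=r\binom{n}{2}+(r-1)n=r\binom{n+1}{2}-n$ is exactly the denominator of the two bases, and the trivial representation contributes the stationary average $n-\frac{1}{r}\sum_{k=1}^n\frac{1}{k}$, for the reason you give).

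The genuine gap is that the two steps carrying all of the content of the theorem are asserted rather than performed, and you say so yourself. In step (i) you claim the content-sum/Jucys--Murphy calculus ``reproduces exactly the two bases appearing in the statement'' without exhibiting the indexing multipartitions or evaluating $\frac{1}{\abs{T}}\sum_{\tau\in T}\chi_\rho(\tau)/\dim_\rho$ on them; in step (ii) you write that the inner products $\langle b,\chi_\rho\rangle$ ``should'' vanish outside two families and ``should'' produce $a_{pq}$ and $\frac{r-1}{r}b_{pq}$. That decomposition is the hard part: $b(w)=\dim\mathrm{Fix}(w)$ is a class function but not a character, and identifying which multipartitions (essentially the near-hook shapes parametrized by $(p,q)$) actually occur, with the stated coefficients involving $\binom{n}{p}\binom{n-p-1}{q-1}$ and the factor $(p-q+1)^2/(n-q+1)^2$, is a nontrivial symmetric-function computation that the sketch defers entirely. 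As it stands, your text is a correct and well-motivated plan --- the same plan the paper attributes to the original authors --- but not a proof; to close it you must either carry out the character expansion of $b$ explicitly, or diagonalize the induced Markov chain on conjugacy classes directly and match eigenvectors, which is what the cited source does.
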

Note that $A_{n-1}\simeq G(1,1,n)$ and $B_n\simeq G(2,1,n)$.

The proof of Eriksen and Hultman's formula involves group
representations and characters. To see why this is natural, 
think of the symmetric group $A_{n-1}$ where
the absolute length of a permutation is $n$ minus
the number of cycles in the cycle representation of the permutation.
Applying a random transposition either splits a cycle or merges two cycles,
and it is easy to see that during this process we only have to
keep track of the cycle type of the permutation, i.e.~its
conjugacy class.

\section{The dihedral groups}\label{sec:typeI}
\noindent
As we have seen,
computing $E^{I_2(m)}_{S,\ell}(t)$ for the dihedral groups
lead to a rather cumbersome expression in Theorem~\ref{th:troili}.
In this section, we will see that much nicer formulas exist
for $E^{I_2(m)}_{T,\ell}(t)$,
$E^{I_2(m)}_{S,\ell'}(t)$, and $E^{I_2(m)}_{T,\ell'}(t)$.

\begin{theo}\label{th:dihedralTl}
For $m\ge2$ and $t\ge1$, the expected length $E^{I_2(m)}_{T,\ell}(t)$
of a product of $t$ random
reflections in the dihedral group $I_2(m)$ is $m/2$ (for all positive $t$)
if $m$ is even and $\frac{m}{2}-\frac{(-1)^t}{2m}$ if $m$ is odd.
\end{theo}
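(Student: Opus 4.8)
The plan is to use the very concrete description of $I_2(m)$ as the group generated by a rotation $\rho=s_1s_2$ of order $m$ together with the reflection $s_1$, subject to $\rho^{j}s_1=s_1\rho^{-j}$. Every reflection in $T$ is uniquely of the form $s_1\rho^{k}$ with $k\in\Z/m\Z$, so a product of $t$ reflections chosen independently and uniformly from $T$ is exactly $s_1\rho^{a_1}s_1\rho^{a_2}\cdots s_1\rho^{a_t}$ with $a_1,\dots,a_t$ independent and uniform on $\Z/m\Z$.

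First I would push every $s_1$ to the left using $\rho^{j}s_1=s_1\rho^{-j}$. This telescopes the product into $\rho^{X}$ when $t$ is even and into $s_1\rho^{X}$ when $t$ is odd, where in both cases $X=\pm\sum_{i=1}^{t}(-1)^{i}a_i$ is an alternating $\pm1$-combination of the $a_i$ in $\Z/m\Z$. Because $x\mapsto-x$ is a bijection of $\Z/m\Z$, each summand $\pm a_i$ is uniform, and the convolution of independent probability distributions on $\Z/m\Z$ one of which is uniform is again uniform; hence $X$ is uniformly distributed on $\Z/m\Z$. Consequently a product of $t$ random reflections is a uniformly random rotation when $t$ is even and a uniformly random reflection when $t$ is odd, so $E^{I_2(m)}_{T,\ell}(t)$ is simply the average of $\ell$ over one of these two $m$-element sets, depending only on the parity of $t$.

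It then remains to evaluate those two averages. From the classification of reduced words in $I_2(m)$ one has $\ell(\rho^{k})=2\min(k,m-k)$, and the $m$ reflections are precisely the odd-length elements (together with the longest element $w_0$ when $m$ is odd, since then $\ell(w_0)=m$ is odd and $w_0$ is not a rotation). Averaging $2\min(k,m-k)$ over $k=0,\dots,m-1$, and averaging the odd lengths over the reflections, are routine finite sums; alternatively one may invoke the length-reversing involution $w\mapsto ww_0$, which for $m$ even pairs rotations with rotations and reflections with reflections, and for $m$ odd pairs rotations with reflections, in each case matching elements whose lengths add up to $m$. The arithmetic yields $m/2$ for both averages when $m$ is even, and $\tfrac{m}{2}-\tfrac{1}{2m}$ (rotations, $t$ even) and $\tfrac{m}{2}+\tfrac{1}{2m}$ (reflections, $t$ odd) when $m$ is odd, which combine into $\tfrac{m}{2}-\tfrac{(-1)^{t}}{2m}$.

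The single point carrying any content is the uniformity assertion in the second paragraph; once it is established the rest is bookkeeping, so I do not anticipate a real obstacle beyond keeping the parity case analysis straight. (Note the hypothesis $t\ge1$ is used only to exclude the empty product, whose length $0$ would not match the formula when $m$ is even.)
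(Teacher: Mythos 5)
Your proof is correct and follows essentially the same route as the paper: both arguments reduce to the observation that after $t\ge1$ steps the product is uniformly distributed on the reflections ($t$ odd) or on the rotations ($t$ even), and then average the length over each of those two $m$-element sets, obtaining the same values $m/2$, $\tfrac{m}{2}+\tfrac{1}{2m}$, $\tfrac{m}{2}-\tfrac{1}{2m}$. The only cosmetic difference is that you derive the uniformity by an explicit normal-form/convolution computation, whereas the paper gets it from the coset observation that multiplying any fixed element by a uniform reflection lands uniformly in the opposite parity class.
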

\begin{proof}
Note that in $I_2(m)$ the set $T$ of reflections is precisely the set
of elements with odd length. Thus, multiplying any element in $T$ by
a uniformly chosen random reflection yields a uniformly chosen random
element in $I_2(m)\setminus T$, and, similarly, multiplying any
element in $I_2(m)\setminus T$ by a uniformly chosen random reflection
yields a uniformly chosen random element in $T$. A simple calculation
reveals that the average length of an element in $T$ is $m/2$ if $m$ is
even and $\frac{m}{2}+\frac{1}{2m}$ if $m$ is odd, and that
the average length of an element in $I_2(m)\setminus T$ is $m/2$ if $m$ is
even and $\frac{m}{2}-\frac{1}{2m}$ if $m$ is odd.
\end{proof}

\begin{theo}
For $m\in\{2,3,\dotsc,\infty\}$ and $t\ge0$, the expected absolute
length $E^{I_2(m)}_{S,\ell'}(t)$
of a product of $t$ random
simple reflections in the dihedral group
$I_2(m)$ is 1 if $t$ is odd and
\[
2-\frac{1}{2^{t-1}}\sum_{\abs{k}\le\lfloor t/2m\rfloor}
\binom{t}{\frac{t}{2}-km}
\]
if $t$ is even. (If $m=\infty$, the sum contains only one term.)
\end{theo}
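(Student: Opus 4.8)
The plan is to reduce the whole computation to the probability that a product of $t$ random simple reflections equals the identity. The structural fact that makes this possible is that in $I_2(m)$ the absolute length takes only the values $0$, $1$, $2$: it is $0$ on the identity, $1$ on every reflection, and $2$ on every nontrivial rotation (such a rotation is a product of two reflections yet is itself neither trivial nor a reflection, since reflections are orientation reversing). Composing with the sign homomorphism $W\to\{\pm1\}$, which sends every reflection to $-1$ and has the rotation subgroup as kernel, a product of $t$ simple reflections lies in the rotation subgroup precisely when $t$ is even. Hence for odd $t$ the product is always a reflection, so $\ell'=1$ with probability $1$ and $E^{I_2(m)}_{S,\ell'}(t)=1$; for even $t$ the product is a rotation with $\ell'\in\{0,2\}$, whence $E^{I_2(m)}_{S,\ell'}(t)=2\,\Prob(\text{product}\ne\id)=2-2\,\Prob(\text{product}=\id)$.

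Next I would reduce $\Prob(\text{product}=\id)$, for $t$ even, say $t=2k$, to the probability that a signed sum of independent uniform bits vanishes modulo $m$. Let $r=ss'$ be the standard rotation generator, of order $m$ (of infinite order if $m=\infty$), so that the two simple reflections are $s$ and $s'=sr$, i.e.\ both have the form $sr^{a}$ with $a\in\{0,1\}$, and a uniformly random simple reflection corresponds to a uniformly random bit $a$. Pushing all the factors $s$ to the right in $(sr^{a_1})(sr^{a_2})\dotsm(sr^{a_t})$ by means of the dihedral relation $r^{a}s=sr^{-a}$, the $s$'s cancel in pairs (as $t$ is even) and the product collapses to $r^{N}$ with $N=\sum_{i=1}^{t}(-1)^{i}a_{i}=-a_1+a_2-\dotsb+a_t$. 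Therefore the product is trivial exactly when $N\equiv0\pmod m$ (when $N=0$, in the case $m=\infty$).

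Finally I would evaluate $\Prob(N\equiv0\pmod m)$. Writing $N=U-V$ with $U=\sum_{i\text{ even}}a_i$ and $V=\sum_{i\text{ odd}}a_i$ independent and binomially distributed with parameters $k$ and $1/2$, a Vandermonde convolution gives $\Prob(N=n)=4^{-k}\sum_{u}\binom{k}{u}\binom{k}{u-n}=4^{-k}\binom{2k}{k-n}$. Summing over $n\equiv0\pmod m$ and using $4^{k}=2^{t}$ and $k=t/2$ yields $\Prob(\text{product}=\id)=2^{-t}\sum_{j}\binom{t}{t/2-jm}$, the sum over $\abs{j}\le\lfloor t/2m\rfloor$ (a single term if $m=\infty$); inserting this into the formula of the first paragraph gives the asserted expression. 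I do not expect a real obstacle here: the only points that need care are the sign bookkeeping in the word collapse and treating $m=\infty$ uniformly, and the single idea worth isolating is that because $\ell'$ is $\{0,1,2\}$-valued we never have to control the full distribution of the random group element, only its probability of being trivial.
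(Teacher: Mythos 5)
Your proposal is correct and follows essentially the same route as the paper: observe that $\ell'$ takes only the values $0,1,2$ in a dihedral group (equal to $1$ exactly on the odd-length coset), so for odd $t$ the answer is $1$ and for even $t$ the answer is $2-2\Prob(\text{product}=\id)$, with the identity probability given by the stated binomial sum. The paper simply asserts that probability, whereas you supply the (correct) derivation via collapsing the word to $r^{N}$ with $N=\sum(-1)^{i}a_{i}$ and a Vandermonde convolution; this is a welcome filling-in of detail, not a different method.
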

\begin{proof}
Clearly, in a dihedral group, the absolute length of an element $w$ is
1 if $\ell(w)$ is odd and 2 if $\ell(w)$ is even, unless $w=\id$.
The probability that a product of an even number $t$ of simple
reflections is the identity element is precisely the sum above
divided by $2^t$.
\end{proof}

\begin{theo}
For $m\ge2$ and $t\ge1$, the expected absolute
length $E^{I_2(m)}_{T,\ell'}(t)$
of a product of $t$ random
reflections in the dihedral group
$I_2(m)$ is 1 if $t$ is odd and $2-\frac{2}{m}$ if $t$ is even.
\end{theo}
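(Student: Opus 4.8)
The plan is to exploit the structure of $I_2(m)$ already used in the proof of Theorem~\ref{th:dihedralTl}. Recall the relevant facts about absolute length in $I_2(m)$: the set $T$ consists exactly of the $m$ elements of odd length (the reflections), each of absolute length $1$; the complement $I_2(m)\setminus T$ is the subgroup of $m$ rotations; the identity has absolute length $0$; and every nontrivial rotation has absolute length $2$, since it is a product of two reflections (e.g.\ $s\cdot sr^a=r^a$ in the presentation $\langle r,s\mid r^m=s^2=(sr)^2=\id\rangle$ with $r$ a rotation of order $m$ and $s$ a reflection) but, having even length, is not itself a reflection. Here $m\ge2$ guarantees that $I_2(m)\setminus T$ really contains a nontrivial rotation.

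First I would recall, from the proof of Theorem~\ref{th:dihedralTl}, the parity/uniformity fact: multiplying any fixed element of $T$ on the right by a uniform random reflection yields a uniform random element of $I_2(m)\setminus T$, and multiplying any fixed element of $I_2(m)\setminus T$ by a uniform random reflection yields a uniform random element of $T$. (If one prefers to re-prove it: since $T$ is a right coset of the rotation subgroup, for any fixed $w$ the map $r\mapsto wr$ is a bijection from $T$ onto $T$ when $w$ is a rotation and onto $I_2(m)\setminus T$ when $w\in T$.) Starting from $\id\in I_2(m)\setminus T$, an easy induction on $t$ then shows that a product of $t$ uniform random reflections is uniformly distributed over the $m$ reflections when $t$ is odd and uniformly distributed over the $m$ rotations when $t$ is even.

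Given this, the conclusion drops out. For $t$ odd the product always lies in $T$, so its absolute length is always $1$ and $E^{I_2(m)}_{T,\ell'}(t)=1$. For $t$ even the product is uniform over the $m$ rotations, exactly one of which (the identity) has absolute length $0$ while the remaining $m-1$ have absolute length $2$, so
\[
E^{I_2(m)}_{T,\ell'}(t)=\frac{1}{m}\bigl((m-1)\cdot 2+0\bigr)=2-\frac{2}{m}.
\]
There is no genuine difficulty here; the only steps needing a moment's care are the verification that a nontrivial rotation cannot be written as a single reflection (so its absolute length is really $2$, not $1$) and, unless one simply cites it, the uniformity statement imported from Theorem~\ref{th:dihedralTl}.
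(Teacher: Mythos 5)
Your proposal is correct and follows essentially the same route as the paper: the paper's proof also invokes the alternation/uniformity argument from Theorem~\ref{th:dihedralTl} together with the observations that every element of $T$ has absolute length $1$ and that the average absolute length over $I_2(m)\setminus T$ is $2-\frac{2}{m}$. Your write-up merely spells out the details (uniformity by induction, and why a nontrivial rotation has absolute length exactly $2$) that the paper leaves implicit.
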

\begin{proof}
The theorem follows from the same reasoning as in the proof of
Theorem~\ref{th:dihedralTl} and the observation that
the absolute length of any element in $T$ is 1 and
the average absolute length of an element in
$I_2(m)\setminus T$ is $2-\frac{2}{m}$.
\end{proof}

\section{The Weyl groups of type A, B and D}\label{sec:typeABD}
\noindent
In this section we will obtain exact expressions for
$E^W_{T,\ell}(t)$ when $W$ is a Weyl group of type A, B, and D.
The formulas and their proofs turn out to be quite similar for
these three types, but the symmetric groups happen to be a bit less
technical to deal with, so we start with those.

\subsection{The symmetric groups}
\noindent
\begin{theo}\label{th:A}
For $n\ge2$ and $t\ge0$, let $\pi^{(t)}\in A_{n-1}$ be the random
permutation obtained by
applying $t$ random transpositions to the identity permutation.
Then the following holds.
\begin{enumerate}
\item[(a)]
The expected number of inversions in $\pi^{(t)}$ is
\[
E^{A_{n-1}}_{T,\ell}(t)
=\frac{n(n-1)}{4}-\frac{(n+1)(n-1)}{6}\left(1-\frac{2}{n-1}\right)^t
-\frac{(n-1)(n-2)}{12}\left(1-\frac{4}{n-1}\right)^t.
\]
\item[(b)]
For $1\le i<j\le n$, the probability that
$\pi^{(t)}_i>\pi^{(t)}_j$ is an inversion is
\[
\frac{1}{2}-\frac{j-i}{n}\left(1-\frac{2}{n-1}\right)^t
+\left(\frac{j-i}{n}-\frac{1}{2}\right)\left(1-\frac{4}{n-1}\right)^t.
\]
\end{enumerate}
\end{theo}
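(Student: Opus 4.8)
The plan is to prove part (b) first and then derive (a) by summing over all pairs $i<j$, since $\ell(\pi^{(t)})$ is the number of inversions and hence $E^{A_{n-1}}_{T,\ell}(t)=\sum_{1\le i<j\le n}\Prob(\pi^{(t)}_i>\pi^{(t)}_j)$. So the heart of the matter is to track, for a fixed pair of positions, whether they form an inversion as transpositions are applied one at a time.

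For part (b), I would set up a Markov chain on a small state space. Fix $1\le i<j\le n$. After $r$ steps the relevant information about $\pi^{(r)}$ is the unordered pair of \emph{values} $\{a,b\}$ currently sitting in positions $i$ and $j$ — or really just the pattern: whether the two values are ``in order'' or ``inverted,'' together with how many of the other $n-2$ values lie strictly between $a$ and $b$. Applying a random transposition $\tau=(k\;l)$ (chosen uniformly among the $\binom{n}{2}$ transpositions, or possibly with the convention that $r_1,\dots,r_t$ range over all $n^2$ ordered pairs with repetition — I will use whichever normalization matches $|T|$ in the definition of $E^M_{R,\varphi}$, here $|T|=\binom{n}{2}$) either touches neither of positions $i,j$ (nothing changes), touches exactly one of them (the value there is swapped for a uniformly random one of the other $n-1$ values, which reshuffles the ``in between'' count and the order), or touches both (the two values swap, flipping order and leaving the in-between count fixed). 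The cleanest encoding is by a single parameter: let $X_r$ be a signed quantity that is positive when positions $i,j$ are in order and negative when inverted, with magnitude recording the number of values strictly between them; one checks that $E[X_{r+1}\mid X_r]$ is an affine function of $X_r$, giving a scalar linear recursion. Actually the slicker route, which I expect the authors to take, is to look directly at the $2\times2$ or $3\times 3$ transfer structure and diagonalize it: the eigenvalues will be exactly $1$, $1-\tfrac{2}{n-1}$, and $1-\tfrac{4}{n-1}$, which is why those bases appear.

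Concretely I would introduce $p_r=\Prob(\pi^{(r)}_i>\pi^{(r)}_j)$ and a second auxiliary statistic — natural candidates are $q_r=\Prob(\text{value in position }i\text{ and value in position }j\text{ are adjacent integers in the right cyclic/linear sense})$, or the expected number of ``separating'' values — chosen so that the pair $(p_r,q_r)$ (or a triple including the constant) satisfies a closed linear system whose matrix has the three eigenvalues above. Computing the one-step transition probabilities is a finite case analysis on how $\tau$ meets $\{i,j\}$: with probability $\binom{n-2}{2}/\binom n2$ it misses both, with probability $2(n-2)/\binom n2$ it hits exactly one, with probability $1/\binom n2$ it hits both. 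Plugging in the uniform reshuffling in the ``hits one'' case is where the dependence on $j-i$ enters the initial condition (at $t=0$, $\pi^{(0)}=\id$ so $p_0=0$ and the in-between count is exactly $j-i-1$), while the recursion itself is independent of $i,j$. Solving the diagonalized recursion with these initial data yields the stated closed form for (b); summing $\sum_{i<j}$ and using $\sum_{i<j}(j-i)=\binom{n+1}{3}$ collapses the $(j-i)$-weighted terms into the coefficients $\frac{(n+1)(n-1)}{6}$ and $\frac{(n-1)(n-2)}{12}$ in (a).

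The main obstacle is choosing the right second coordinate so that the dynamics genuinely close up into a two- (or three-) dimensional linear system: the naive pair ``is it an inversion / how many values lie between'' does not quite close because the ``hits one'' transition mixes these in a way that, at first sight, seems to need finer information. The resolution is that by symmetry among the $n-2$ untouched values and among the two touched positions, the expectations one actually needs do close — one must verify this carefully, i.e.\ check that $E[\text{in-between count at step }r+1\mid\text{state at }r]$ depends on the state only through $p_r$ and the in-between count itself. Once that lemma is in hand, everything else is bookkeeping: writing the $3\times3$ transition matrix, reading off its eigenvalues $1,\,1-\tfrac2{n-1},\,1-\tfrac4{n-1}$ and eigenvectors, and matching the $t=0$ data. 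I would also double-check the edge cases $n=2$ (where the second eigenvalue term must vanish, consistent with $\frac{(n-1)(n-2)}{12}=0$) and small $t$ against a direct enumeration.
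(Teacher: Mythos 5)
Your plan is sound, and once the closure step you defer is actually carried out it becomes essentially the paper's computation in localized form --- but note that the crux is precisely the step you postpone. The good news: the closure does hold. Writing $Y_r=\pi^{(r)}_j-\pi^{(r)}_i$, your signed in-between count is $X_r=Y_r-\sgn(Y_r)$, and in the ``hits exactly one of $i,j$'' case the replacement value is uniform on the $n-2$ remaining values, so its conditional expectation involves $\pi^{(r)}_i+\pi^{(r)}_j$; the point is that these $\pi_i+\pi_j$ contributions \emph{cancel} between the subcase where $i$ is hit and the subcase where $j$ is hit. One then gets the closed system $E[Y_{r+1}]=\bigl(1-\tfrac{2}{n-1}\bigr)E[Y_r]$ and $E[\sgn Y_{r+1}]=\bigl(1-\tfrac{4}{n-1}\bigr)E[\sgn Y_r]+\tfrac{4}{n(n-1)}E[Y_r]$, with $Y_0=j-i$, $\sgn Y_0=1$ and $\Prob(\pi^{(t)}_i>\pi^{(t)}_j)=\tfrac12\bigl(1-E[\sgn Y_t]\bigr)$; solving gives (b), and your summation $\sum_{i<j}(j-i)=n(n^2-1)/6$ gives (a). One caveat: your claim that $E[X_{r+1}\mid X_r]$ is affine in $X_r$ alone is not literally correct, since the one-step expectation depends on $Y_r$ and $\sgn(Y_r)$ separately; your fallback to a two- or three-dimensional system is the right move. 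The paper sidesteps the closure question entirely by globalizing: it tracks $p^{(t)}_{i,j}=\Prob(\pi^{(t)}_i<\pi^{(t)}_j)$ for \emph{all} ordered pairs of positions simultaneously, so the ``hits one position'' case merely replaces an index and the linear system closes trivially; antisymmetrizing to $v_{i,j}=u_{i,j}-u_{j,i}$ and using the projection identity $Q^2=nQ$ (where $Q$ adds row and column sums) then yields the eigenvalues $1-\tfrac{2}{n-1}$ and $1-\tfrac{4}{n-1}$ without writing down an explicit transfer matrix. Since the row sum $\sum_{i'}p^{(t)}_{i',j}$ equals $E[\pi^{(t)}_j]-1$, the paper's hidden auxiliary coordinates are $E[\pi_i]$ and $E[\pi_j]$, which differ from your in-between count only by the linear substitution $E[X]=E[\pi_j]-E[\pi_i]-E[\sgn Y]$; so the two arguments are the same modulo packaging. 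The global version buys automatic closure; your local version buys a small state space and a probabilistic reading of the eigenvalues, but you must supply the cancellation lemma to make it a proof.
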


\begin{proof}
Part (a) of the theorem follows from part (b) by the simple observation that
$\sum_{1\le i<j\le n}(j-i)=n(n^2-1)/6$.

It remains to prove part (b).
For every $1\le i,j\le n$ and $t\ge0$, let
\[
p^{(t)}_{i,j} := {\rm Prob}(\pi^{(t)}_i < \pi^{(t)}_j)
\text{\ \ \ \ \ \ \ and\ \ \ \ \ \ \ }
u^{(t)}_{i,j} := \binom{n}{2}^t p^{(t)}_{i,j}.
\]
Clearly, $u^{(t)}_{i,i}=0$ and for $i\ne j$,
\begin{equation}\label{eq:urec}
u^{(t+1)}_{i, j}=\binom{n}{2}u^{(t)}_{i, j}
+(u^{(t)}_{ j,i}-u^{(t)}_{i, j})
+\sum_{i'\ne j} (u^{(t)}_{i',j}-u^{(t)}_{i,j})
+\sum_{j'\ne i} (u^{(t)}_{i,j'}-u^{(t)}_{i,j}).
\end{equation}
Now, define $v^{(t)}_{i, j}:=u^{(t)}_{i, j}-u^{(t)}_{ j,i}$ for
$1\le i, j\le n$. The above recurrence for $u$ translates to the following
recurrence for $v$.
\[
v^{(t+1)}_{i, j}=\binom{n}{2}v^{(t)}_{i, j}
-2v^{(t)}_{i, j}
+\sum_{i'\ne j} (v^{(t)}_{i',j}-v^{(t)}_{i,j})
+\sum_{j'\ne i} (v^{(t)}_{i,j'}-v^{(t)}_{i,j})
\]
Using that $v^{(t)}_{i,i}=0$ for any $i$, we can write the recurrence
in a more elegant way:
\begin{equation}\label{eq:vrec}
v^{(t+1)}_{i,j}=\frac{n(n-5)}{2}v^{(t)}_{i, j}
+\sum_{i'=1}^n v^{(t)}_{i',j}
+\sum_{j'=1}^n v^{(t)}_{i,j'}.
\end{equation}
Next, we will express the above recurrence in terms of linear operators.
Let $\mathcal{A}_n$ denote the vector space over $\R$ of all real
antisymmetric $n\times n$ matrices, and
define the linear operator $Q$ on $\mathcal{A}_n$
as follows. For any $v\in\mathcal{A}_n$ and for any $1\le i, j\le n$,
let each entry in $Qv$ be its row sum plus its and column sum in $v$,
or formally,
\[
(Qv)_{i, j} = \sum_{i'=1}^n v_{i',j}+\sum_{j'=1}^n v_{i,j'}.
\]
Now, the recurrence relation~\eqref{eq:vrec} can be written
\[
v^{(t+1)}=(Q+xI)v^{(t)},
\]
where $I$ is the identity operator on $\mathcal{A}_n$ and $x=\frac{n(n-5)}{2}$.
The starting point $v^{(0)}$ is the $n\times n$
matrix with zeroes on the main diagonal, ones above it and minus ones below it.
From the simple but crucial observation that $Q^2=nQ$ we deduce that
\[
v^{(t)}=(Q+xI)^t v^{(0)}
=\left[(n+x)^t\frac{Q}{n}+x^t\left(I-\frac{Q}{n}\right)\right]v^{(0)}.
\]
Since $(Qv^{(0)})_{i, j} = 2( j-i)$ we obtain
\[
v^{(t)}_{i, j}=
\frac{2( j-i)}{n}(n+x)^t + \left(1-\frac{2( j-i)}{n}\right)x^t
\]
if $ j>i$.
Dividing this by $\binom{n}{2}^t$ yields
\[
1-2p^{t}_{ j,i}=p^{t}_{i, j}-p^{t}_{ j,i}
=
\frac{2( j-i)}{n}\left(1-\frac{2}{n-1}\right)^t
+ \left(1-\frac{2( j-i)}{n}\right)\left(1-\frac{4}{n-1}\right)^t
\]
which proves part (b) of the theorem.
\end{proof}

A striking feature of the second part of Theorem~\ref{th:A} is
that, for a fixed $t$, the probility that $i<j$ is an inversion
depends only on the difference $j-i$. This fact is not evident from the
definition of the random process, but nevertheless it can be proved
without invoking Theorem~\ref{th:A}.

\begin{prop}
Fix $n\ge2$ and $t\ge0$, and let $\pi\in A_{n-1}$
be a product of $t$ random transpositions. Then,
the probability that $\pi_i>\pi_{j}$ is an inversion
(where $1\le i<j\le n$) is only dependent on $j-i$.
\end{prop}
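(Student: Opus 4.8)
The plan is to derive the claim from the invariance of the random process under conjugation, upgraded to a translation symmetry by conjugating with the long cycle $c=(1\,2\,\cdots\,n)$. Write $\pi^{(t)}=\tau_1\cdots\tau_t$ with $\tau_1,\dots,\tau_t$ independent and uniform on the set of transpositions. For any fixed $\sigma\in A_{n-1}$, the sequence $(\sigma\tau_1\sigma^{-1},\dots,\sigma\tau_t\sigma^{-1})$ is again independent and uniform on transpositions, so $\sigma\pi^{(t)}\sigma^{-1}$ has the same law as $\pi^{(t)}$. Abbreviating $q(i,j)=\Prob(\pi^{(t)}_i>\pi^{(t)}_j)$ for $i<j$, it suffices to prove that $q(i,j)=q(i+1,j+1)$ for all $1\le i<j\le n-1$, since iterating this relation forces $q(i,j)$ to depend only on $j-i$.

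First I would record a one-line consequence of conjugation-invariance: $\Prob(\pi^{(t)}_a=n)=\Prob(\pi^{(t)}_b=n)$ whenever $a,b\ne n$. Indeed, conjugation by the transposition $(a\,b)$ fixes the letter $n$ and turns the event $\{\pi^{(t)}_a=n\}$ into $\{((a\,b)\pi^{(t)}(a\,b))_b=n\}$, which has the same probability as $\{\pi^{(t)}_b=n\}$.

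Then comes the main step, conjugation by $c=(1\,2\,\cdots\,n)$. For $1\le i<j\le n-1$ one verifies the deterministic identities $(c\pi c^{-1})_{i+1}=c(\pi_i)$ and $(c\pi c^{-1})_{j+1}=c(\pi_j)$, so conjugation-invariance gives $q(i+1,j+1)=\Prob(c(\pi^{(t)}_i)>c(\pi^{(t)}_j))$. Now $c$ acts as $v\mapsto v+1$ on $\{1,\dots,n-1\}$ and sends $n$ to $1$, so the events $\{\pi_i>\pi_j\}$ and $\{c(\pi_i)>c(\pi_j)\}$ coincide, except that the first holds and the second fails on $\{\pi_i=n\}$, while the second holds and the first fails on $\{\pi_j=n\}$. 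Therefore $q(i,j)-q(i+1,j+1)=\Prob(\pi^{(t)}_i=n)-\Prob(\pi^{(t)}_j=n)$, which vanishes by the previous paragraph, and we are done.

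The hard part — really the only part that is not bookkeeping — is the failure of $c$ to be order-preserving: conjugation by $c$ would yield the desired translation symmetry immediately if $c$ were a genuine order-isomorphism, but $c(n)=1$ spoils this. The point of the argument is that the resulting error is concentrated on the two events $\{\pi_i=n\}$ and $\{\pi_j=n\}$, and these have equal probability precisely because $i$ and $j$ are both different from $n$ in the range we need. Everything else I expect to be routine.
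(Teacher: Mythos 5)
Your proposal is correct and is essentially the paper's own argument: the paper's ``add $k$ modulo $n$'' map $\pi\mapsto\pi^{+k}$ is exactly the action of the long cycle $c^k$ on values, its displayed identity with the correction terms $\Prob(\pi_i>n-k)-\Prob(\pi_j>n-k)$ is the general-$k$ form of your $\Prob(\pi_i=n)-\Prob(\pi_j=n)$ bookkeeping, and both proofs kill that correction and relate $q(i,j)$ to $q(i+k,j+k)$ via the same conjugation-invariance of the law of $\pi^{(t)}$. You merely take $k=1$ and iterate, which changes nothing essential.
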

\begin{proof}
Let $\pi^{+k}$ be the permutation
obtained by adding $k$ ``modulo $n$'' to all entries of $\pi$,
i.e.
\[
\pi^{+k}_i=\begin{cases}
\pi_i+k & \text{if } \pi_i+k\le n, \\
\pi_i+k-n & \text{if } \pi_i+k>n.
\end{cases}
\]
Clearly, for any $1\le i<j\le n$, we have
\[
\Prob(\pi_i>\pi_j)=\Prob(\pi^{+k}_i>\pi^{+k}_j)
+\Prob(\pi_i>n-k)-\Prob(\pi_j>n-k).
\]

Now, fix $i,j,k$ with $1\le i<j<j+k\le n$. It suffices to prove that
\begin{equation}\label{eq:pitrans}
\Prob(\pi_{i+k}>\pi_{j+k})=\Prob(\pi_i>\pi_j).
\end{equation}

By symmetry of the random process, we have
$\Prob(\pi_i>n-k)=\Prob(\pi_j>n-k)$ and hence
$\Prob(\pi_i>\pi_j)=\Prob(\pi^{+k}_i>\pi^{+k}_j)$.
Equation~\eqref{eq:pitrans} now follows from the observation that
$\Prob(\pi_{i+k}>\pi_{j+k})=\Prob(\pi^{+k}_i>\pi^{+k}_j)$.
\end{proof}

\subsection{A definition and a lemma}
\noindent
The situation becomes a little more delicate for Weyl groups of type
B and D, but in a way that is very similar for these types. To
be economical we gather the bit of reasoning that is common for type
B and D in a definition and a lemma.

\begin{defi}
For any positive integer $n$,
let $\mathcal{I}_n:=\{(i,j)\,:\,i,j\in[-n,n]\setminus\{0\},
\,\abs{i}\ne\abs{j}\}$.
\end{defi}

\begin{lemma}\label{lm:BD}
Let $n$ be a positive integer and $x$ a real number.
For every integer $t\ge0$, define a function
$v^{(t)}\,:\,\mathcal{I}_n\rightarrow\R$ by the recurrence
relation
\[
v^{(t+1)}_{i, j} = xv^{(t)}_{i, j}
+\sum_{\abs{i'}\ne\abs{ j}}v^{(t)}_{i', j}
+\sum_{\abs{j'}\ne\abs{i}}v^{(t)}_{i,j'},
\]
together with the initial condition $v^{(0)}_{i, j}=\sgn( j-i)$.
Then these functions are given by
\[
v^{(t)}_{i, j}=
\frac{ j-i-\sgn j+\sgn i}{n-1}(2n-2+x)^t
+\left(\sgn( j-i)-\frac{ j-i-\sgn j+\sgn i}{n-1}\right)x^t.
\]
\end{lemma}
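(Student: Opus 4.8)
The plan is to imitate the proof of Theorem~\ref{th:A}(b): rephrase the recurrence as the iteration of a single linear operator, isolate a small invariant subspace on which that operator satisfies a quadratic identity, and then decompose the $t$-th iterate spectrally.

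First I would record two symmetries of the initial datum $v^{(0)}_{i,j}=\sgn(j-i)$: it is antisymmetric, $v^{(0)}_{j,i}=-v^{(0)}_{i,j}$, and negation-odd, $v^{(0)}_{-i,-j}=-v^{(0)}_{i,j}$. Let $V$ be the space of all functions $v\colon\mathcal{I}_n\to\R$ enjoying both properties, and define a linear operator $Q$ on functions $\mathcal{I}_n\to\R$ by
\[
(Qv)_{i,j}=\sum_{\abs{i'}\ne\abs{j}}v_{i',j}+\sum_{\abs{j'}\ne\abs{i}}v_{i,j'},
\]
so that the recurrence of the lemma reads $v^{(t+1)}=(Q+xI)v^{(t)}$ with $v^{(0)}\in V$. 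Writing $h(i):=\sum_{\abs{j'}\ne\abs{i}}v_{i,j'}$, antisymmetry rewrites the first sum as $-h(j)$, so for $v\in V$ one gets the clean shape $(Qv)_{i,j}=h(i)-h(j)$; since this expression is again antisymmetric and negation-odd, $V$ is $Q$-invariant. Moreover, negation-oddness of $v$ forces $h(-i)=-h(i)$, hence $\sum_{\abs{j'}\ne\abs{i}}h(j')=-h(i)-h(-i)=0$, and one more application of the same computation to $Qv$ gives $Q^2v=(2n-2)Qv$ for all $v\in V$.

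With the quadratic relation in hand, for $n\ge2$ the restriction of $P:=Q/(2n-2)$ to $V$ is idempotent, with $Q=(2n-2)P$ and $Q(I-P)=0$ on $V$, so
\[
(Q+xI)^t=(2n-2+x)^t\,P+x^t\,(I-P)\qquad\text{on }V,
\]
and therefore $v^{(t)}=(2n-2+x)^t\,Pv^{(0)}+x^t\bigl(v^{(0)}-Pv^{(0)}\bigr)$. (For $n=1$ the set $\mathcal{I}_1$ is empty and there is nothing to prove.) It only remains to evaluate $Qv^{(0)}$. For fixed $j>0$, among the indices $i'\in[-n,n]\setminus\{0\}$ with $\abs{i'}\ne\abs{j}$ there are $n+j-2$ lying below $j$ and $n-j$ lying above it, so $\sum_{\abs{i'}\ne\abs{j}}\sgn(j-i')=2(j-1)=2(j-\sgn j)$; negating the summand and relabelling gives $\sum_{\abs{j'}\ne\abs{i}}\sgn(j'-i)=-2(i-\sgn i)$ for $i>0$, and both identities extend to all $j\ne0$ (resp.\ $i\ne0$) by symmetry. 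Hence $(Qv^{(0)})_{i,j}=2\bigl(j-i-\sgn j+\sgn i\bigr)$ and $(Pv^{(0)})_{i,j}=\dfrac{j-i-\sgn j+\sgn i}{n-1}$, which substituted above is exactly the claimed formula.

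The one genuinely delicate point is the choice of $V$ together with the verification that $Q^2=(2n-2)Q$ there: antisymmetry alone merely puts $(Qv)_{i,j}$ into the shape $h(i)-h(j)$ but leaves a residual $h(i)+h(-i)$ term in $Q^2v$, and it is precisely the negation-oddness, through $h(-i)=-h(i)$, that kills it. Everything afterwards — invariance of $V$ under $Q$, idempotency of $P$, and the elementary index count for $Qv^{(0)}$, where the only subtlety is that the excluded index $-j$ sits on the ``below'' side of $j$ when $j>0$ and thereby manufactures the $-\sgn j$ — is routine bookkeeping.
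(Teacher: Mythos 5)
Your proof is correct and follows essentially the same route as the paper: the same invariant subspace (your ``negation-odd'' condition $v_{-i,-j}=-v_{i,j}$ is, given antisymmetry, equivalent to the paper's $v_{-j,-i}=v_{i,j}$), the same key identity $Q^2=(2n-2)Q$, the same projection decomposition of $(Q+xI)^t$, and the same evaluation of $Qv^{(0)}$. The only difference is cosmetic: where the paper verifies $Q^2=(2n-2)Q$ by expanding the double sums directly, you package the computation via $(Qv)_{i,j}=h(i)-h(j)$ with $h$ odd, which is a slightly cleaner bookkeeping of the identical argument.
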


\begin{proof}
Let $\mathcal{D}_n$ denote the real vector space of all functions
$v\,:\,\mathcal{I}_n\rightarrow\R$ such that $v_{ j,i}=-v_{i, j}$ and
$v_{- j,-i}=v_{i, j}$ for any $(i, j)\in\mathcal{I}_n$. (We write
$v_{i, j}$ instead of the more common $v(i, j)$ since we may view
$\mathcal{D}_n$ as the space of $2n\times2n$ matrices that are
antisymmetric about the main diagonal and symmetric about the
antidiagonal, if we disregard the entries on the diagonals.)

Let $Q$ be the linear operator on $\mathcal{D}_n$
defined by
\[
(Qv)_{i, j} =
\sum_{\abs{i'}\ne\abs{ j}}v_{i', j}
+\sum_{\abs{j'}\ne\abs{i}}v_{i,j'}.
\]
for any $v\in\mathcal{D}_n$ and $(i, j)\in\mathcal{I}_n$,

We have
\begin{align*}
&(Q^2v)_{i, j} = \sum_{\abs{i'}\ne\abs{ j}}(Qv)_{i', j}
+\sum_{\abs{j'}\ne\abs{i}}(Qv)_{i,j'} \\
&=
\sum_{\abs{i'}\ne\abs{ j}}
\left(\sum_{\abs{i''}\ne\abs{ j}}v_{i'', j}
+\sum_{\abs{j''}\ne\abs{i'}}v_{i',j''}\right)
+\sum_{\abs{j'}\ne\abs{i}}
\left(\sum_{\abs{i''}\ne\abs{j'}}v_{i'',j'}
+\sum_{\abs{j''}\ne\abs{i}}v_{i,j''}\right)\\
&=
(2n-2)\sum_{\abs{i''}\ne\abs{ j}}v_{i'', j}
+\sum_{(j'',i')\in\mathcal{I}_n}v_{i',j''}
-\sum_{\abs{j''}\ne\abs{ j}}(v_{ j,j''}+v_{- j,j''})\\
&\quad +\sum_{(i'',j')\in\mathcal{I}_n}v_{i'',j'}
-\sum_{\abs{i''}\ne\abs{i}}(v_{i'',i}+v_{i'',-i})
+(2n-2)\sum_{\abs{j''}\ne\abs{i}}v_{i,j''}\\
&\text{(now using the symmetry and antisymmetry properties)}\\
&=(2n-2)\left(\sum_{\abs{i''}\ne\abs{ j}}v_{i'', j}
+\sum_{\abs{j''}\ne\abs{i}}v_{i,j''}\right) \\
&=(2n-2)(Qv)_{i, j}.
\end{align*}
Thus, we have showed that
$Q^2=(2n-2)Q$ which means that
\[
(Q+xI)^t v^{(0)}
=\left[(2n-2+x)^t\frac{Q}{2n-2}
+x^t\left(I-\frac{Q}{2n-2}\right)\right]v^{(0)},
\]
where $I$ is the identity operator on $\mathcal{D}_n$.
The lemma now follows from the observation that
\[
(Qv^{(0)})_{i, j} = \sum_{\abs{i'}\ne\abs{ j}}\sgn( j-i')
+\sum_{\abs{j'}\ne\abs{i}}\sgn(j'-i)
=2\bigl( j-i-\sgn j+\sgn i\bigr).
\]
\end{proof}

\subsection{The hyperoctahedral groups}
\noindent
Recall that $B_n$ is isomorphic to the
subgroup of permutations $\pi$ of the set $[-n,n]\setminus\{0\}$ such that
$\pi_{-i}=-\pi_i$ for all $1\le i\le n$ (see
e.g.~\cite[Ch.~8]{bjornerbrenti}).
In this representation, the set of reflections is
\[
\{(i,j)(-i,-j)\,:\,1\le\abs{i}<j\le n\}\cup\{(i,-i)\,:\,1\le i\le n\}
\]
in cycle notation.
The length of $\pi$ equals its number of \emph{B-inversions}, which are
pairs $(i,j)$ with $i,j\in[-n,n]\setminus\{0\}$ and $j\ge\abs{i}$ such that
$\pi_i>\pi_j$.

\begin{theo}
For $n\ge1$ and $t\ge0$, let $\pi^{(t)}$ be a product of $t$ random
reflections in $B_n$. Then the following holds.
\begin{enumerate}
\item[(a)]
The expected length of $\pi^{(t)}$ is
\[
E^{B_n}_{T,\ell}(t)=
\frac{n^2}{2}-\frac{n(n+1)}{3}\left(1-\frac{2}{n}\right)^t
-\frac{n(n-2)}{6}\left(1-\frac{4}{n}+\frac{2}{n^2}\right)^t.
\]
\item[(b)]
If $i, j\in[-n,n]\setminus\{0\}$ and $j>\abs{i}$,
the probability that $\pi^{(t)}_i>\pi^{(t)}_j$ is
\[
\frac{1}{2}-\frac{ j-i-1+\sgn i}{2(n-1)}
\left(1-\frac{2}{n}\right)^t
+\left(\frac{ j-i-1+\sgn i}{2(n-1)}-\frac{1}{2}\right)
\left(1-\frac{4}{n}+\frac{2}{n^2}\right)^t.
\]
For $1\le i\le n$, the probability that
$\pi^{(t)}_{-i}>\pi^{(t)}_{i}$ is
\[
\frac{1}{2}-\frac{1}{2}\left(1-\frac{2}{n}\right)^t.
\]
\end{enumerate}
\end{theo}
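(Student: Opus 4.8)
The plan is to follow the template of Theorem~\ref{th:A} and to push the combinatorial heart of the argument into Lemma~\ref{lm:BD}. First I would reduce part~(a) to part~(b). The length of $\pi^{(t)}$ is its number of B-inversions, i.e.\ the pairs $(i,j)$ with $i,j\in[-n,n]\setminus\{0\}$, $j\ge\abs{i}$ and $\pi^{(t)}_i>\pi^{(t)}_j$; these are the $n$ pairs $(-i,i)$ with $1\le i\le n$, whose probabilities are given by the second formula of part~(b), together with the $n(n-1)$ pairs with $j>\abs{i}$, given by the first formula. Summing the two expressions over the respective pairs and using the elementary facts $\sum_{(i,j):\,j>\abs{i}}i=\sum_{(i,j):\,j>\abs{i}}\sgn i=0$ and $\sum_{(i,j):\,j>\abs{i}}(j-1)=\tfrac13 n(n-1)(2n-1)$, everything collapses to the claimed closed form; this step is routine.

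For part~(b) I would realize $B_n$ as signed permutations of $[-n,n]\setminus\{0\}$ and let the random reflections act by right multiplication, so that $\pi^{(t+1)}_k=\pi^{(t)}_{r(k)}$ when $r$ is the reflection drawn at step $t+1$. Since every element of $B_n$ commutes with negation and $\abs{i}\ne\abs{j}$ forces $\abs{r(i)}\ne\abs{r(j)}$, the numbers $p^{(t)}_{i,j}:=\Prob(\pi^{(t)}_i<\pi^{(t)}_j)$ are well defined for $(i,j)\in\mathcal{I}_n$ and obey $p^{(t+1)}_{i,j}=\abs{T}^{-1}\sum_{r\in T}p^{(t)}_{r(i),r(j)}$. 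Putting $v^{(t)}_{i,j}:=p^{(t)}_{i,j}-p^{(t)}_{j,i}$, the identity $\pi^{(t)}_{-k}=-\pi^{(t)}_k$ shows that $v^{(t)}$ lies in the space $\mathcal{D}_n$, has initial value $v^{(0)}_{i,j}=\sgn(j-i)$, and — after multiplying by $\abs{T}^{t}=n^{2t}$ — satisfies exactly the recurrence of Lemma~\ref{lm:BD} with $x=n^2-4n+2$. Reading off the lemma's formula, dividing back by $n^{2t}$ (so that $(2n-2+x)/n^2=1-\tfrac2n$ and $x/n^2=1-\tfrac4n+\tfrac2{n^2}$), and using $\Prob(\pi^{(t)}_i>\pi^{(t)}_j)=\tfrac12\bigl(1-v^{(t)}_{i,j}\bigr)$ together with $\sgn j=\sgn(j-i)=1$ for $j>\abs{i}$ gives the first formula of part~(b). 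For the second formula, the process is invariant under relabelling positions, so $q^{(t)}:=\Prob(\pi^{(t)}_i<0)$ does not depend on $i\in\{1,\dotsc,n\}$; exactly $n$ of the $n^2$ reflections send a given position to a negative position, so $q^{(t+1)}=(1-\tfrac2n)q^{(t)}+\tfrac1n$ with $q^{(0)}=0$, which solves to $q^{(t)}=\tfrac12-\tfrac12(1-\tfrac2n)^t$, and this equals $\Prob(\pi^{(t)}_{-i}>\pi^{(t)}_i)$ because $\pi^{(t)}_{-i}=-\pi^{(t)}_i$.

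The one genuinely delicate step is the identity needed to invoke the lemma: for every $v\in\mathcal{D}_n$ and $(i,j)\in\mathcal{I}_n$,
\[
\sum_{r\in T}v_{r(i),r(j)}=(n^2-4n+2)\,v_{i,j}
+\sum_{\abs{i'}\ne\abs{j}}v_{i',j}+\sum_{\abs{j'}\ne\abs{i}}v_{i,j'}.
\]
I would prove it by a census of the $n^2$ reflections according to how they move $i$ and $j$, using that each reflection is either a sign change $(c,-c)$ or a double swap $(a,b)(-a,-b)$ with $1\le\abs{a}<b\le n$. There are exactly $(n-2)^2$ reflections fixing both $i$ and $j$ (they are the reflections of the copy of $B_{n-2}$ acting on the axes different from $\abs{i}$ and $\abs{j}$), contributing $(n-2)^2 v_{i,j}$. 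The reflections that move $i$ but fix $j$ send $i$ bijectively onto $\{\,i'\in[-n,n]\setminus\{0\}:\abs{i'}\ne\abs{j},\ i'\ne i\,\}$, so they contribute $\sum_{\abs{i'}\ne\abs{j}}v_{i',j}-v_{i,j}$, and symmetrically for those moving $j$ but fixing $i$. Finally there are just two reflections moving both $i$ and $j$, namely the two double swaps on the axis set $\{\abs{i},\abs{j}\}$; their contributions are $v_{\sgn(i)\abs{j},\,\sgn(j)\abs{i}}$ and $v_{-\sgn(i)\abs{j},\,-\sgn(j)\abs{i}}$, and these cancel since $v_{-b,-a}=v_{a,b}=-v_{b,a}$ in $\mathcal{D}_n$. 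Adding everything up, the coefficient of $v_{i,j}$ is $(n-2)^2-1-1=n^2-4n+2$ and the remaining terms are exactly the two prescribed sums. Keeping the sign conventions straight throughout this census — particularly for the double swaps that reverse signs — is where all the care is needed.
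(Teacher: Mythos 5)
Your proof is correct and follows essentially the same route as the paper: reduce (a) to (b), antisymmetrize the probabilities $p^{(t)}_{i,j}$ to obtain an element of $\mathcal{D}_n$ with initial value $\sgn(j-i)$, verify by a census of the $n^2$ reflections that it satisfies the recurrence of Lemma~\ref{lm:BD} with $x=n^2-4n+2$, and read off the formula. The only (harmless) deviation is that you treat the pairs $(-i,i)$ by a direct one-dimensional recurrence for $\Prob(\pi^{(t)}_i<0)$, whereas the paper extracts the scalar recurrence $v^{(t+1)}_{-i,i}=n(n-2)v^{(t)}_{-i,i}$ from the same census; both yield $\tfrac12-\tfrac12\left(1-\tfrac2n\right)^t$.
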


\begin{proof}
Part (a) of the theorem follows from part (b) by the simple observation that
$\sum_{ j>\abs{i}}( j-i)=2n(n^2-1)/3$.

It remains to prove part (b).
For every $i, j\in[-n,n]\setminus\{0\}$ and $t\ge0$, let
\[
p^{(t)}_{i, j} := {\rm Prob}(\pi^{(t)}_i < \pi^{(t)}_ j)
\text{\ \ \ \ \ \ \ and\ \ \ \ \ \ \ }
u^{(t)}_{i, j} := n^{2t} p^{(t)}_{i, j}.
\]
The following recurrence relation holds.
\begin{equation}\label{eq:urecB}
\begin{split}
u^{(t+1)}_{i, j} &= n^2 u^{(t)}_{i, j}
+(u^{(t)}_{ j,i}-u^{(t)}_{i, j})
+(u^{(t)}_{- j,-i}-u^{(t)}_{i, j}) \\
& \quad +\sum_{\abs{i'}\ne\abs{ j}} (u^{(t)}_{i', j}-u^{(t)}_{i, j})
+\sum_{\abs{j'}\ne\abs{i}} (u^{(t)}_{i,j'}-u^{(t)}_{i, j}) \quad\quad
\text{if $\abs{ j}\ne\abs{i}$, and}\\
u^{(t+1)}_{-i,i} &= n^2 u^{(t)}_{-i,i} + (u^{(t)}_{i,-i}-u^{(t)}_{-i,i})
+\sum_{\abs{i'}\ne\abs{i}} (u^{(t)}_{-i',i'}-u^{(t)}_{-i,i}).
\end{split}
\end{equation}
We also have the symmetry property $u^{(t)}_{- j,-i}=u^{(t)}_{i, j}$.

Now, define $v^{(t)}_{i, j}:=u^{(t)}_{i, j}-u^{(t)}_{ j,i}$ for
$i, j\in[-n,n]\setminus\{0\}$. Clearly, $v$ inherits the symmetry
property from $u$ and has also the antisymmetry property
$v^{(t)}_{ j,i} = -v^{(t)}_{i, j}$.
The above recurrence for $u$ is valid also for $v$ and using the
symmetry and antisymmetry properties it can be written
\begin{equation}\label{eq:vrecB}
\begin{split}
v^{(t+1)}_{i, j} &= (n^2-4n+2)v^{(t)}_{i, j}
+\sum_{\abs{i'}\ne\abs{j}}v^{(t)}_{i', j}
+\sum_{\abs{j'}\ne\abs{i}}v^{(t)}_{i,j'}
\quad\quad\text{if $\abs{ j}\ne\abs{i}$, and}\\
v^{(t+1)}_{-i,i} &= n(n-2)v^{(t)}_{-i,i}
\end{split}
\end{equation}
Clearly, this implies that
$v^{(t)}_{i,-i}=(n^2-2n)^t$ for $1\le i\le n$,
and if $ j>\abs{i}$ Lemma~\eqref{lm:BD} yields
\[
v^{(t)}_{i, j}=
\frac{ j-i-1+\sgn i}{n-1}(n^2-2n)^t
+\left(1-\frac{ j-i-1+\sgn i}{n-1}\right)
(n^2-4n+2)^t.
\]

After dividing this by $n^{2t}$ we obtain
\begin{align*}
& 1-2p^{t}_{ j,i}=p^{t}_{i, j}-p^{t}_{ j,i} \\
&=
\frac{ j-i-1+\sgn i}{n-1}\left(1-\frac{2}{n}\right)^t
+\left(1-\frac{ j-i-1+\sgn i}{n-1}\right)
\left(1-\frac{4}{n}+\frac{2}{n^2}\right)^t
\end{align*}
if 
$ j>\abs{i}$ and
\[
1-2p^{t}_{-i,i}=\left(1-\frac{2}{n}\right)^t
\]
for $1\le i\le n]$. This proves part (b) of the theorem.
\end{proof}

\subsection{The groups of type D}
\noindent
Recall that $D_n$ is isomorphic to the
subgroup of permutations $\pi$ of the set $[-n,n]\setminus\{0\}$ such that
$\pi_{-i}=-\pi_i$ for all $1\le i\le n$ and there is an even number
of $i\in\{1,2,\dotsc,n\}$ such that $\pi_i<0$ (again see
e.g.~\cite[Ch.~8]{bjornerbrenti}).
In this representation, the set of reflections is
\[
\{(i,j)(-i,-j)\,:\,1\le \abs{i}<j\le n\}
\]
in cycle notation.
The length of $\pi$ equals its number of \emph{D-inversions}, which are
pairs $(i,j)$ with $i,j\in[-n,n]\setminus\{0\}$ and $j>\abs{i}$ such that
$\pi_i>\pi_j$.

\begin{theo}
For $n\ge1$ and $t\ge0$, let $\pi^{(t)}$ be a product of
$t$ random reflections in $D_n$. Then the following holds.
\begin{enumerate}
\item[(a)]
The expected length of $\pi^{(t)}$ is
\[
E^{D_n}_{T,\ell}(t)=
\frac{n(n-1)}{2}-\frac{n(2n-1)}{6}\left(1-\frac{2}{n}\right)^t
-\frac{n(n-2)}{6}\left(1-\frac{4}{n}\right)^t.
\]
\item[(b)]
If $i, j\in[-n,n]\setminus\{0\}$ and $ j>\abs{i}$,
the probability that $\pi^{(t)}_i>\pi^{(t)}_{ j}$ is
\[
\frac{1}{2}-\frac{ j-i-1+\sgn i}{2(n-1)}
\left(1-\frac{2}{n}\right)^t
+\left(\frac{ j-i-1+\sgn i}{2(n-1)}-\frac{1}{2}\right)
\left(1-\frac{4}{n}\right)^t.
\]
\end{enumerate}
\end{theo}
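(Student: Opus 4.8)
The plan is to follow the proof of the hyperoctahedral case almost line by line, keeping track of the two structural differences: $D_n$ has $\abs T=n(n-1)$ reflections rather than $n^2$ (there are no sign-change reflections $(i,-i)$), and a $D$-inversion $(i,j)$ always has $j>\abs i$, hence $\abs i\ne\abs j$, so no separate ``diagonal'' recurrence for pairs $(-i,i)$ is needed. As in the $B_n$ proof, part~(a) will follow from part~(b): one has $E^{D_n}_{T,\ell}(t)=\sum_{j>\abs i}\Prob(\pi^{(t)}_i>\pi^{(t)}_j)$, and after inserting the expression from~(b) and summing, the routine arithmetic (using $\sum_{j>\abs i}1=n(n-1)$, $\sum_{j>\abs i}\sgn i=0$, and $\sum_{j>\abs i}(j-i)=2n(n^2-1)/3$ from the $B_n$ proof) collapses the constant term to $n(n-1)/2$ and the coefficients of $(1-2/n)^t$ and $(1-4/n)^t$ to $-n(2n-1)/6$ and $-n(n-2)/6$.

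For part~(b) I would set $p^{(t)}_{i,j}:=\Prob(\pi^{(t)}_i<\pi^{(t)}_j)$ and $u^{(t)}_{i,j}:=(n^2-n)^t p^{(t)}_{i,j}$ for $(i,j)\in\mathcal{I}_n$, and condition on the last reflection $r$ in $\pi^{(t+1)}=\pi^{(t)}r$, which acts by permuting positions; since a reflection $(a,b)(-a,-b)$ with $1\le\abs a<b\le n$ moves position $i$ exactly when $\abs a$ or $b$ equals $\abs i$, this gives
\[
u^{(t+1)}_{i,j}=(n^2-n)u^{(t)}_{i,j}+(u^{(t)}_{j,i}-u^{(t)}_{i,j})+(u^{(t)}_{-j,-i}-u^{(t)}_{i,j})+\sum_{\abs{i'}\ne\abs i,\abs j}(u^{(t)}_{i',j}-u^{(t)}_{i,j})+\sum_{\abs{j'}\ne\abs i,\abs j}(u^{(t)}_{i,j'}-u^{(t)}_{i,j}),
\]
which is the analogue of the first line of~\eqref{eq:urecB} with $n^2$ replaced by $n^2-n$ and with the two contributions of the sign-change reflections $(i,-i)$ and $(j,-j)$ deleted (so the sums run over $\abs{i'}\ne\abs i,\abs j$ and $\abs{j'}\ne\abs i,\abs j$ instead of $\abs{i'}\ne\abs j$ and $\abs{j'}\ne\abs i$). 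I would also record the symmetry $u^{(t)}_{-j,-i}=u^{(t)}_{i,j}$ and note that this recurrence is self-contained on $\mathcal{I}_n$.

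Setting $v^{(t)}_{i,j}:=u^{(t)}_{i,j}-u^{(t)}_{j,i}$, which inherits $v^{(t)}_{j,i}=-v^{(t)}_{i,j}$ and $v^{(t)}_{-j,-i}=v^{(t)}_{i,j}$, I would rewrite each restricted sum $\sum_{\abs{i'}\ne\abs i,\abs j}$ as $\sum_{\abs{i'}\ne\abs j}$ minus the $i'=i$ and $i'=-i$ terms (and likewise on the $j$-side), subtract the recurrences for $u^{(t+1)}_{i,j}$ and $u^{(t+1)}_{j,i}$, and use the symmetry and antisymmetry relations (in particular $u^{(t)}_{-i,j}=u^{(t)}_{-j,i}$ and $u^{(t)}_{i,-j}=u^{(t)}_{j,-i}$) to cancel the leftover boundary terms, arriving at
\[
v^{(t+1)}_{i,j}=(n^2-5n+4)\,v^{(t)}_{i,j}+\sum_{\abs{i'}\ne\abs j}v^{(t)}_{i',j}+\sum_{\abs{j'}\ne\abs i}v^{(t)}_{i,j'}.
\]
Since $v^{(0)}_{i,j}=\sgn(j-i)$, Lemma~\ref{lm:BD} applies with $x=n^2-5n+4=(n-1)(n-4)$, for which $2n-2+x=(n-1)(n-2)$; dividing the resulting closed form by $(n^2-n)^t$ turns the two bases into $(1-2/n)^t$ and $(1-4/n)^t$, and since $v^{(t)}_{i,j}/(n^2-n)^t=p^{(t)}_{i,j}-p^{(t)}_{j,i}=1-2\Prob(\pi^{(t)}_i>\pi^{(t)}_j)$ while $\sgn j=\sgn(j-i)=1$ when $j>\abs i$, solving for the probability yields precisely the formula in~(b).

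The combinatorial bookkeeping of which reflections act on a given pair of positions and the closing arithmetic for part~(a) are routine. The one genuinely delicate step is the passage from the $u$-recurrence to the clean $v$-recurrence, i.e.\ verifying that after expanding the restricted sums the surplus single terms cancel in pairs against the $u_{-j,-i}$-type contributions under the symmetry relations, so that the coefficient of $v^{(t)}_{i,j}$ comes out to be exactly $(n-1)(n-4)$ with no stray terms surviving; that is where I expect the effort to go, and once the recurrence has the shape required by Lemma~\ref{lm:BD} with the correct $x$, the rest is forced.
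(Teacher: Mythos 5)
Your proposal is correct and follows essentially the same route as the paper: the same quantities $p^{(t)}_{i,j}$, $u^{(t)}_{i,j}=(n^2-n)^t p^{(t)}_{i,j}$, and $v^{(t)}_{i,j}=u^{(t)}_{i,j}-u^{(t)}_{j,i}$, the same $u$-recurrence (the paper's \eqref{eq:urecD}), the same reduction via the symmetry $u^{(t)}_{-j,-i}=u^{(t)}_{i,j}$ to the clean $v$-recurrence with $x=n^2-5n+4$, and the same application of Lemma~\ref{lm:BD} followed by division by $(n^2-n)^t$. The cancellation you flag as the delicate step (the boundary terms $v_{-i,j}+v_{i,-j}=0$) does go through exactly as you describe, and the part~(a) arithmetic is right.
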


\begin{proof}
Part (a) of the theorem follows from part (b) by the simple observation that
$\sum_{ j>\abs{i}}( j-i)=2n(n^2-1)/3$.

It remains to prove part (b).
For every $t\ge0$ and
$(i, j)\in\mathcal{I}_n$, let
\[
p^{(t)}_{i, j} := {\rm Prob}(\pi^{(t)}_i < \pi^{(t)}_ j)
\text{\ \ \ \ \ \ \ and\ \ \ \ \ \ \ }
u^{(t)}_{i, j} := (n^2-n)^t p^{(t)}_{i, j}.
\]
The following recurrence relation holds:
\begin{equation}\label{eq:urecD}
\begin{split}
u^{(t+1)}_{i, j} &= (n^2-n) u^{(t)}_{i, j}
+(u^{(t)}_{ j,i}-u^{(t)}_{i, j})
+(u^{(t)}_{- j,-i}-u^{(t)}_{i, j}) \\
& \quad +\sum_{\abs{i'}\ne\abs{i},\abs{ j}} 
(u^{(t)}_{i', j}-u^{(t)}_{i, j})
+\sum_{\abs{j'}\ne\abs{i},\abs{ j}}
(u^{(t)}_{i,j'}-u^{(t)}_{i, j}) \quad\quad
\end{split}
\end{equation}
We also have the symmetry property $u^{(t)}_{- j,-i}=u^{(t)}_{i, j}$.

Now, define $v^{(t)}_{i, j}:=u^{(t)}_{i, j}-u^{(t)}_{ j,i}$ for
$(i, j)\in\mathcal{I}$.
Clearly, $v^{(t)}$ inherits the symmetry
property from $u^{(t)}$ and has also the antisymmetry property
$v^{(t)}_{ j,i} = -v^{(t)}_{i, j}$.
The above recurrence for $u$ is valid also for $v$ and using the
symmetry and antisymmetry properties it can be written
\begin{equation}\label{eq:vrecD}
v^{(t+1)}_{i, j} = (n^2-5n+4)v^{(t)}_{i, j}
+\sum_{\abs{i'}\ne\abs{ j}}v^{(t)}_{i', j}
+\sum_{\abs{j'}\ne\abs{i}}v^{(t)}_{i,j'}.
\end{equation}
By Lemma~\ref{lm:BD} we obtain, for $j>\abs{i}$,
\[
v^{(t)}_{i, j}=
\frac{ j-i-1+\sgn i}{n-1}(n^2-3n+2)^t
+\left(1-\frac{ j-i-1+\sgn i}{n-1}\right)(n^2-5n+4)^t.
\]
Dividing by $(n^2-n)^t$ yields
\begin{align*}
& 1-2p^{t}_{ j,i}=p^{t}_{i, j}-p^{t}_{ j,i} \\
&=
\frac{ j-i-1+\sgn i}{n-1}\left(1-\frac{2}{n}\right)^t
+\left(1-\frac{ j-i-1+\sgn i}{n-1}\right)
\left(1-\frac{4}{n}\right)^t
\end{align*}
if 
$ j>\abs{i}$. This proves part (b) of the theorem.
\end{proof}

\section{Conclusion and ideas for future research}\label{sec:future}
\noindent
We can sum up the current state of knowledge by the following table,
listing the groups $W$ for which we have exact expressions
for $E^W_{R,\varphi}(t)$.
\vskip5mm
\begin{center}
\begin{tabular}{l||c|c}
& $R=$ simple reflections & $R=$ all reflections \\
\hline\hline
$\varphi=$ length & $A_n$, $I_2(m)$, $I_2(\infty)$
& $A_n$, $B_n$, $I_2(m)$, $G(r,1,n)$ \\
\hline
$\varphi=$ absolute length & $I_2(m)$, $I_2(\infty)$
& $A_n$, $B_n$, $D_n$, $I_2(m)$
\end{tabular}
\end{center}
\vskip5mm
Perhaps the two most striking things with this table are that
$B_n$ is missing from the upper left square and that
not even $A_n$ appears in the lower left square!

As Troili has pointed out~\cite{troili}, the problem of
computing $E^{B_n}_{S,\ell}(t)$ can be reduced to a heat flow process
similar to that of the symmetric group, but it is not clear how to
analyze it further.

Computing $E^{A_n}_{S,\ell'}(t)$ is equivalent to computing
the expected number of cycles (in the cycle representation of the
permutation) after applying $t$ random adjacent transpositions to
the identity permutation. For now,
we have no idea how to attack this natural problem.
Note that it is not enough to
keep track of the conjugacy class of the permutation as Eriksen and
Hultman did in the proof of Theorem~\ref{th:eriksenhultman}.

It is also worth noting that $E^W_{S,\ell}(t)$ and $E^W_{S,\ell'}(t)$
is well-defined for any finitely generated Coxeter system. (Indeed,
Troili made an effort to compute $E^{\tilde{A}_n}_{S,\ell}(t)$
in~\cite{troili}, but unfortunately with an erraneous heat flow process.)

In addition to the length and the absolute length,
there is at least one more ``length'' function that comes naturally
with any Coxeter system $(W,S)$, namely the
\emph{descent number} defined by
\[
d(w)=\#\{s\in S\,:\,\ell(ws)<\ell(w)\}.
\]
(In fact, the ordinary length function is also a kind of descent number
by the equality $\ell(w)=\#\{t\in T\,:\,\ell(wt)<\ell(w)\}$, see
e.g.~\cite[Corollary~1.4.5]{bjornerbrenti}.)
What is $E^W_{S,d}(t)$ and $E^W_{T,d}(t)$ for a (in the second case, finite)
Coxeter system $(W,S)$?

\section{Acknowledgement}
\noindent
This work was performed at KTH in Stockholm and was supported
by a grant from the Swedish Research Council (621-2009-6090).

\bibliographystyle{abbrv}
\bibliography{expected}

\end{document}